\theoremstyle{plain}
\newtheorem*{theorema}{Theorem A}
\newtheorem*{theoremb}{Theorem B}
\newtheorem{theorem}{Theorem}[section]
\newtheorem{proposition}[theorem]{Proposition}
\newtheorem{lemma}[theorem]{Lemma}
\theoremstyle{definition}
\newtheorem{definition}[theorem]{Definition}
\theoremstyle{remark}
\newcommand\Div{\operatorname{Div}}
\newcommand\Pic{\operatorname{Pic}}
\newcommand\Prin{\operatorname{Prin}}
\newcommand\supp{\operatorname{supp}}
\begin{document}

\title{Chip-Firing on Trees of Loops}
\author{Sameer Kailasa}
\email{kailasas@uchicago.edu}
\address{University of Chicago}
\author[Vivian Kuperberg]{Vivian Kuperberg*}\thanks{*Corresponding author}
\email{vzk2@cornell.edu}
\address{Cornell University}
\author{Nicholas Wawrykow}
\email{nicolas.wawrykow@yale.edu}
\address{Yale University}

\begin{abstract}
Cools, Draisma, Payne, and Robeva proved that generic metric graphs that are ``paths of loops'' are Brill-Noether general. We show that Brill-Noether generality does not hold for ``trees of loops'': the only trees of loops that are Brill-Noether general are paths of loops. We study various notions of generality and examine which of these graphs satisfy them.
\end{abstract}

\maketitle

\section{Introduction}

Let $\Gamma$ be a compact tropical curve, or a \emph{metric graph}. The polyhedral subset $W^r_d(\Gamma)$ of $\Pic^d(\Gamma)$, consisting of linear equivalence classes of divisors of degree $d$ and rank at least $r$, has expected dimension
\[\rho(g,r,d) = g - (r+1)(g-d+r). \]

This expectation is formalized by the notion of \emph{Brill-Noether generality}, in which a graph $\Gamma$ is Brill-Noether general if $W^r_d(\Gamma)$ is empty whenever $\rho < 0$ and  $\mathrm{dim}(W^r_d(\Gamma)) = \mathrm{min}\{\rho,g\}$ whenever $\rho \ge 0$. Very few examples of Brill-Noether general graphs are known. The most studied is the set of metric graphs that are combinatorially a path, or chain, of $g$ loops, with generic edge lengths. In \cite{cdpr}, these are shown to be Brill-Noether general. A natural combinatorial generalization is to consider metric graphs $\Gamma$ that are trees of $g$ loops, with generic edge lengths. In this paper we examine these graphs and show that the expectation $\rho$ for the dimension of $W^r_d(\Gamma)$ is never accurate unless $\Gamma$ is a path of loops.

\begin{theorema}
\label{onlypaths}
A tree of loops $\Gamma$ is Brill-Noether general if and only if $\Gamma$ is a path of loops.
\end{theorema}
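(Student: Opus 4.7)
The ``if'' direction is precisely the theorem of Cools, Draisma, Payne and Robeva cited above, applied to generic edge lengths. The substantive content is the ``only if'' direction: if $\Gamma$ is a tree of loops whose block-cut tree is not a path, I must exhibit $(r,d)$ witnessing failure of Brill-Noether generality, either by producing a class in $W^r_d(\Gamma)$ when $\rho(g,r,d) < 0$, or by showing $\dim W^r_d(\Gamma) > \rho$.

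The plan is to exploit the branching structure combinatorially. Since the block-cut tree of $\Gamma$ is not a path, it contains a vertex of degree at least three; this vertex is either an articulation point $v$ shared by three loops $L_1,L_2,L_3$ (Case A), or a single loop $L_0$ containing three distinct articulation points $v_1,v_2,v_3$ (Case B). In Case A the natural candidate divisor is $D=2v$: on each elliptic loop $L_i$ meeting $v$, the ``hyperelliptic'' relation $2v \sim p+q$ (with $q$ the unique partner of $p$ on $L_i$) shows that $|2v-p| \neq \emptyset$ for every $p \in L_1 \cup L_2 \cup L_3$. Restricted to the local tripod, which has genus $3$, this already forces Brill-Noether failure since $\rho(3,1,2) = -1$. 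In Case B I instead use $D = v_1+v_2+v_3$: the loop $L_0$ being elliptic makes $D$ have rank $2$ on $L_0$, and combining this with hyperelliptic moves on each attached $L_i$ yields extra rank on the local ``tricarbon'' sub-configuration beyond what $\rho$ permits.

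The main obstacle is promoting these local witnesses to witnesses on the \emph{full} graph $\Gamma$, since the rank of a divisor can drop when additional loops are attached elsewhere in the graph. Two strategies suggest themselves: (i) augment $D$ with carefully-placed chips on the remaining blocks so that the augmented divisor $D'$ still satisfies $r(D') > \rho(g, r(D'), \deg D')$; or (ii) slide a chip of $D$ along one of the loops to produce a one-parameter family $\{D_t\}$ all lying in $W^r_d(\Gamma)$, thereby forcing $\dim W^r_d(\Gamma) > \rho$. Both strategies rely on careful chip-firing analysis via Dhar's burning algorithm and the theory of $v$-reduced divisors on trees of loops, which I expect will be developed as preparatory machinery earlier in the paper. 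The technical heart of the proof is verifying these rank bounds globally, tracking how chip-firing moves propagate through every loop of $\Gamma$ and confirming that no cancellation at distant blocks destroys the excess rank produced by the branching.
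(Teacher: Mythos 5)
Your overall architecture is the right one, and your strategy (ii) for the base case is essentially what the paper does: the paper takes the genus-$4$ configuration (one central loop with three loops attached at $A$, $B$, $C$), places one chip at $A$, one at $B$, and one at a variable angle $\theta$ on the central loop, and checks via loop-by-loop Riemann--Roch that every $D_\theta$ has rank $\ge 1$; this one-parameter family gives $\dim W^1_3 \ge 1 > \rho(4,1,3)=0$. But you have correctly identified, and then left open, the actual technical heart: promoting the local witness to all of $\Gamma$. Moreover, your strategy (i) as stated does not work. Exhibiting a single divisor $D'$ with $r(D') > \rho(g,r(D'),\deg D')$ is not a violation of Brill--Noether generality once $\rho \ge 0$; the definition only constrains $\dim W^r_d$, not the rank of individual divisors. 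And in your Case A, augmenting $2v$ with one chip per remaining loop drives the relevant $\rho$ up to nonnegative values for large genus, so the ``$\rho<0$ yet nonempty'' violation you get locally does not persist globally by that route. What is missing is a quantitative statement that the \emph{dimension excess} survives as loops are attached.

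The paper supplies exactly this: if $\Gamma_2$ is a loop and $\Gamma = \Gamma_1 \wedge \Gamma_2$, then $\dim W^r_{d+1}(\Gamma) \ge \dim W^r_d(\Gamma_1) + 1$ (proved by showing $D + (p)$ has rank $\ge r$ on $\Gamma$ for $D \in W^r_d(\Gamma_1)$ and $p \in \Gamma_2$, and then computing the dimension of the Minkowski sum $W^r_d(\Gamma_1) + W^0_1(\Gamma_2)$ using orthogonality under the cycle-intersection form). Combined with the elementary identity $\rho(g+1,r,d+1) = \rho(g,r,d) + 1$, this shows that a strict inequality $\dim W^r_d(\Gamma_i) > \rho$ propagates when a loop is wedged on, so one inducts from the genus-$4$ core up to all of $\Gamma$. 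Without this lemma (or an equivalent device), your proof does not close: the claim that ``no cancellation at distant blocks destroys the excess rank'' is precisely the assertion that needs proof, and it is a statement about dimensions of $W^r_d$ keeping pace with the growth of $\rho$, not merely about ranks of individual divisors being preserved.
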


This negative result inspires the definition of other notions of Brill-Noether generality. One natural approach is to weaken Brill-Noether generality to the boolean condition that $\rho$ is nonnegative if and only if $W^r_d(\Gamma)$ is nonempty. We refer to this boolean condition as \emph{weak Brill-Noether generality}. However, examining this condition for general trees of loops gives another negative result.

\begin{theoremb}
Let $\Gamma$ be a tree of loops of genus $g$ such that the longest path of loops consists of at most $g-2$ loops. Then $\Gamma$ is not weakly Brill-Noether general.
\end{theoremb}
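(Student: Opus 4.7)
The plan is to witness the failure of weak Brill-Noether generality by exhibiting an explicit $(r,d)$ with $\rho(g,r,d)<0$ for which $W^r_d(\Gamma)\neq\emptyset$. The hypothesis enters as follows: fix a longest path of loops $P\subseteq\Gamma$, of length $g'\le g-2$, so that the remaining $b = g-g'\ge 2$ loops form hanging sub-trees $B_1,\dots,B_k$ attached to $P$ at vertices $w_1,\dots,w_k$; this branching is the source of the unexpected divisors.

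I would construct the required effective divisor $D$ on $\Gamma$ by combining a CDPR-style boundary divisor on $P$ with a small number of carefully placed supplementary chips on the hanging sub-trees. Concretely, on $P$ take a rank-$r$ divisor of degree $d_0$ with $\rho(g',r,d_0)=0$, which exists by \cite{cdpr}; on each hanging sub-tree $B_i$ place $d_1^{(i)}$ chips at ``torsion-absorbing'' points near $w_i$, chosen so that the combined divisor $D$ retains rank $r$ on all of $\Gamma$. The verification of $r_\Gamma(D)\ge r$ would proceed by taking an arbitrary effective $E$ of degree $r$ on $\Gamma$, decomposing $E$ according to its support on $P$ and on each $B_i$, and chip-firing the restriction $E|_{B_i}$ back toward the attachment vertex $w_i$; the torsion adjustments produced by this chip-firing on the loops of $B_i$ are designed to be precisely cancelled by the supplementary chips of $D$, reducing the rank verification to the CDPR rank property of $D|_P$ on $P$. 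A direct parameter check then gives
\[
\rho(g,r,d) \;=\; \rho(g',r,d_0) - rb + (r+1)d_1 \;=\; -rb + (r+1)d_1,
\]
which is strictly negative whenever $d_1 < rb/(r+1)$; the bound $b\ge 2$ is what creates room for such a choice.

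The main obstacle is the simultaneous design of the supplementary chips: they must be chosen so that chip-firing inside each hanging sub-tree cleanly absorbs arbitrary effective $E$ on that sub-tree. I expect this to require an inductive argument on the combinatorial complexity of the hanging sub-tree, base-cased at a single pendant loop---where the torsion class lives in $\mathbf{R}/\mathbf{Z}$ and a single well-chosen chip suffices---and assembled coherently up the shadow tree. The assumed genericity of edge lengths makes the torsion classes of distinct loops independent, which is essential for arranging the cancellations coherently across all the $B_i$. The edge case $b=2$ with $r=1$ forces $d_1=0$ in the above count and may require a separate argument, likely by taking $r\ge 2$ and exploiting the additional flexibility in higher rank.
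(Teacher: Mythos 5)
Your overall strategy---exhibit a pair $(r,d)$ with $\rho(g,r,d)<0$ and $W^r_d(\Gamma)\neq\emptyset$---is the same as the paper's, but your construction has a gap that I do not think can be closed in the form you describe. Your own parameter count forces $d_1 < rb/(r+1) < b$, i.e.\ strictly fewer than one supplementary chip per hanging loop on average. Now take $E$ of degree $e\le r$ supported entirely on a single hanging loop $B_i$ at generic points. By Riemann--Roch on a loop, a divisor of degree $e$ on $B_i$ has rank $e-1$, so to cover $E$ you must deliver total degree at least $e+1$ to $B_i$; the rank-$r$ property of $D|_P$ on $P$ only lets you move $e$ chips (as part of an effective divisor of degree $r$) to the attachment point $w_i$, so at least one unit of the missing degree must come from a genuine supplementary chip already sitting on $B_i$. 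Since $E$ can be concentrated on any hanging loop, this forces $d_1\gtrsim b$, contradicting $d_1<rb/(r+1)$. The ``torsion-absorbing'' mechanism cannot rescue this: the torsion class of $E|_{B_i}$ relative to $w_i$ varies continuously as $E$ moves, while $D$ is fixed, so no finite placement of chips cancels it for all $E$---the only way to beat torsion on a loop is the extra unit of degree you cannot afford. Relatedly, the case you flag as an ``edge case'' ($r=1$, $d_1=0$) is in fact the main case, and genericity of edge lengths works against you here rather than for you.

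The paper's proof sidesteps all of this by not using a CDPR divisor on $P$ at all. It places a single pile of $\frac{l}{2}+1$ chips (respectively $\frac{l+3}{2}$ when $l$ is odd) at the midpoint of a longest path of $l$ loops. Maximality of the path forces every loop of $\Gamma$ to lie within $\frac{l}{2}-1$ loops of the center, and a pile loses one chip of degree per loop it is pushed through, so the pile delivers at least $2$ chips to every loop of $\Gamma$; by Riemann--Roch on a loop this gives the divisor rank $1$. Its degree is about $\frac{l}{2}+1\le \frac{g}{2}$, whereas $\rho(g,1,d)\ge 0$ requires $d\ge \frac{g+2}{2}$, so $\rho<0$. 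The point the paper exploits is that the hypothesis $l\le g-2$ makes the tree ``bushy'' (small loop-radius relative to genus), which is invisible in your path-plus-branches decomposition. I would recommend abandoning the CDPR-based construction and arguing via the radius of the tree instead.
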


Another approach, introduced in \cite{lpp}, is to examine the \emph{Brill-Noether rank} $w^r_d(\Gamma)$ as opposed to the dimension of the space $W^r_d(\Gamma)$. The Brill-Noether rank also has expected value $\rho$. Similarly, the condition of \emph{rank Brill-Noether generality} is identical to the condition of \emph{Brill-Noether generality}, but measuring $w^r_d(\Gamma)$ (instead of $\mathrm{dim}(W^r_d(\Gamma))$) against $\rho$. The authors in \cite{lpp} show that for $\Gamma$ a loop of loops of genus $4$, $w^1_3(\Gamma) = 0 = \rho(4,1,3)$. As a result, rank Brill-Noether generality may hold in some cases where geometric Brill-Noether generality does not. This paper concludes by introducing a possible technique for examining the rank Brill-Noether generality of trees of loops.

\section{Background}

We will be working within the realm of divisors on metric graphs. A \emph{metric graph} $\Gamma$ is a compact connected metric space such that for all points $p \in \Gamma$, $p$ has a neighborhood isometric to a star-shaped set. A \emph{divisor} $D$ on a metric graph $\Gamma$ is an element of the free abelian group generated by $\Gamma$; we write $\Div(\Gamma)$ for the group of these divisors under addition. For a divisor $D=a_{1}v_{1}+\cdots+a_{n}v_{n}$ with $a_i \in \mathbb Z$ for all $i$, the \emph{degree} of $D$ is the sum of the coefficients $a_{1}+\cdots+a_{n}$, and a divisor is called \emph{effective} if all coefficients $a_i$ are nonnegative. We denote by $D(v)$ the coefficient of $v$ in the divisor $D$, so that 
\[D = \sum_{v \in \Gamma} D(v) \cdot v. \]

For $f$ a continuous piecewise linear function on $\Gamma$ with integer slopes, we may consider the \emph{divisor of $f$} given by
\[\mathrm{div}(f) = \sum_{v \in \Gamma} \mathrm{ord}_v(f) \cdot v \]
where for all $v \in \Gamma$, $\mathrm{ord}_v(f)$ is the sum of the incoming slopes of $f$ at $v$. Divisors of piecewise linear functions are called \emph{principal}, and $\Prin(\Gamma)$ denotes the additive group of principal divisors. 

Two divisors $D$ and $D'$ on a metric graph $\Gamma$ are said to be \emph{equivalent}, \emph{chip-firing equivalent}, or \emph{linearly equivalent}, written $D\sim D'$, if $D - D' \in \Prin(\Gamma)$. Informally, $D$ and $D'$ are equivalent if and only if one can move from $D$ to $D'$ via moves in the \emph{chip-firing game on metric graphs}. In this game, a divisor $D$ is thought of as a configuration of finitely many chips placed on a metric graph, with $D(v)$ the number of chips at any point $v$. Negative coefficients are taken to be piles of ``anti-chips'' instead, where an anti-chip and a chip cancel whenever they collide. One can then ``fire'' a point on the graph with a certain speed. When a point $p$ is fired, a chip is sent along every edge adjacent to $p$ at the same speed, so that each chip lands the same distance away from the point $p$. On a metric graph, any closed subset can in fact be fired, so that 
chips are sent along outgoing edges. The intuition of firing a closed subset exactly aligns with the concept of adding a principal divisor. Chip-firing on metric graphs is also discussed in \cite{bn},\cite{bs}, \cite{cdpr}, and \cite{mz}. In \cite{o}, Osserman examines chip-firing on metric graphs with an approach akin to the one presented here.

For a point $v \in \Gamma$ and a divisor $D$, one can consider the unique \emph{$v$-reduced divisor} $D_0$ equivalent to $D$, which satisfies the following two conditions.
\begin{enumerate}
\item $D_0$ is effective away from $v$.
\item Let $A \subseteq \Gamma \setminus \{v\}$ be any closed connected set. Then there exists $p \in \partial A$ with $\mathrm{outdeg}_A(p) < D(p)$. Here $\mathrm{outdeg}_A(p)$ is the degree of $p$ in the graph $\Gamma \setminus A \cup \{p\}$. Intuitively, this condition means that no more chips may be fired towards $v$ while preserving the effectiveness of $D_0$ away from $v$.
\end{enumerate}
The $v$-reduced divisor $D_0$ can be obtained from $D$ via  \emph{Dhar's burning algorithm}, which is explained in \cite{l}. For each divisor $D$ on a metric graph $\Gamma$, and for each $v \in \Gamma$, there is a unique $v$-reduced divisor equivalent to $D$. Among divisors equivalent to $D$ and effective away from $v$, the unique $v$-reduced divisor has maximal coefficient at $v$. 

The \emph{Picard group} $\Pic^0(\Gamma)$ of $\Gamma$ is the quotient group $\Div^0(\Gamma)/\sim$, which can be viewed as the $g$-dimensional torus $H_1(\Gamma,\mathbb R)/H_1(\Gamma,\mathbb Z) \cong \mathrm{Jac}(\Gamma)$ via the Abel-Jacobi map. Similarly, for any degree $d$ we define $Pic^d(\Gamma) = \Div^d(\Gamma)/\sim$, which is a $\Pic^0(\Gamma)$-torsor for every $d$. Moreover, $\Gamma$ and thus $H_1(\Gamma,\mathbb R)$ is endowed with a natural ``cycle intersection'' bilinear form  (see \cite{mz}, \cite{bf}, \cite{abks}).

The question at hand is then which equivalence classes of divisors contain an effective divisor, and the robustness of this containment. This concept is made rigorous by the definition of a divisor's \emph{rank}.

\begin{definition}
The \emph{rank} $r(D)$ of a divisor $D$ on a metric graph $\Gamma$ is the largest nonnegative integer $r$ such that for every effective divisor $E$ of degree $r$ on $\Gamma$, the divisor $D-E$ is equivalent to an effective divisor. If $D$ is not equivalent to an effective divisor, $r(D)$ is defined to be $-1$.
\end{definition}

One particular divisor, known as the \emph{canonical divisor} $K$ on a metric graph $\Gamma$ is defined as
\[K = \sum_v (\deg v - 2)v,\]
ranging over all vertices $v \in \Gamma$. Then $\deg K = 2g - 2$, which can be checked by examining the Euler characteristic of $\Gamma$. This divisor is a key part of the tropical Riemann-Roch Theorem, which holds for metric graphs and is a useful result in the study of divisors on metric graphs.

\begin{theorem}[Tropical Riemann-Roch Theorem] \cite{mz}, \cite{gk}

Let $D$ be a divisor on a metric graph $\Gamma$ of genus $g$. Then
\[r(D) - r(K-D) = \deg(D) + 1 - g.\]
\end{theorem}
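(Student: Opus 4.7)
My plan is to adapt Baker--Norine's argument for finite graphs to the metric setting, as in \cite{gk}, \cite{mz}. The proof rests on three ingredients: Riemann's inequality, a carefully chosen non-special divisor of degree $g-1$, and a duality lemma relating a class to its ``complement''.

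First, I would establish Riemann's inequality, $r(D) \geq \deg(D) - g$. Fix a base point $v \in \Gamma$ and let $D_0$ denote the $v$-reduced divisor equivalent to $D$. The key observation is that $D_0(v) \geq \deg(D) - g$: the $v$-reduced condition, combined with Dhar's burning algorithm, forces the off-$v$ portion of $D_0$ to be ``tightly packed'' by the cycles of $\Gamma$ and hence to have total mass at most $g$. Applied iteratively, this shows that $D - E$ is equivalent to an effective divisor for every effective $E$ of degree at most $\deg(D) - g$, yielding the inequality.

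Next, I would construct a non-special divisor $\nu$ of degree $g-1$. Choose a sufficiently generic linear functional on $\Gamma$ to induce an acyclic orientation on (a suitable model of) the underlying combinatorial graph, and set $\nu(v) = \mathrm{indeg}(v) - 1$ at each vertex. Summing indegrees shows $\deg \nu = g - 1$, and a direct application of Dhar's algorithm at the unique sink shows $\nu$ is not equivalent to any effective divisor; by the symmetry under orientation reversal, neither is $K - \nu$.

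The main step---and the main obstacle---is the \emph{duality lemma}: for every $E \in \Div^{g-1}(\Gamma)$, exactly one of $E$ and $\nu - E$ is equivalent to an effective divisor, so translation by $\nu$ exchanges winnable and unwinnable classes in $\Pic^{g-1}(\Gamma)$. I would prove this by analyzing the $q$-reduced representative of $E$ for an appropriately chosen $q$: either the reduction produces a nonnegative divisor (so $E$ is winnable), or the trace of Dhar's algorithm encodes an orientation that, compared against the one defining $\nu$, produces an effective representative for $\nu - E$. Adapting this case analysis from the discrete to the continuous setting is the technical heart of the proof. Once the duality is in hand, Riemann--Roch follows by telescoping: relate $r(D)$ and $r(K-D)$ by pairing, via the duality, each effective divisor that could be subtracted from $D$ with a degree-$(g-1)$ obstruction on the other side, and track degrees to obtain the formula $\deg(D) + 1 - g$.
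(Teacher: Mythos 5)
The paper does not prove this theorem; it is quoted as background and attributed to \cite{mz} and \cite{gk}. For reference, \cite{gk} deduces the metric statement from the Baker--Norine theorem for finite graphs by rational approximation and subdivision, while \cite{mz} argues via tropical Jacobians; your plan of redoing the Baker--Norine argument directly in the continuous setting is a third, legitimate route, and your first two ingredients --- Riemann's inequality via the bound $D_0(v)\ge \deg(D)-g$ for $v$-reduced divisors, and the degree count $\deg\nu=g-1$ for an orientation divisor --- are sound in outline (though note an acyclic orientation need not have a unique sink, so the non-specialness argument needs the full Baker--Norine treatment rather than a single run of Dhar's algorithm).

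The duality lemma, however, is false as you state it, and it is the load-bearing step. If $\deg E=g-1$ and $\deg\nu=g-1$, then $\deg(\nu-E)=0$, so ``$\nu-E$ is equivalent to an effective divisor'' just means $E\sim\nu$; your lemma would then assert that $[\nu]$ is the \emph{only} unwinnable class in $\Pic^{g-1}(\Gamma)$. That cannot be: the winnable classes form the theta divisor $W^0_{g-1}(\Gamma)$, a polyhedral subset of dimension $g-1$ inside the $g$-dimensional torus $\Pic^{g-1}(\Gamma)$, so unwinnable classes are dense and no single $\nu$ can implement the dichotomy. The correct statement requires the entire family $\mathcal N$ of divisors arising from acyclic orientations (over all models of $\Gamma$), and it applies to divisors of every degree: for each $D\in\Div(\Gamma)$, exactly one of (i) $r(D)\ge 0$ and (ii) $r(\nu-D)\ge 0$ for some $\nu\in\mathcal N$ holds; specialized to degree $g-1$ this says the unwinnable classes are precisely the orientation classes. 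You also compress the final step too far: passing from this dichotomy to $r(D)-r(K-D)=\deg(D)+1-g$ is not mere telescoping but requires the quantitative identity $r(D)+1=\min\{\deg^+(D'-\nu): D'\sim D,\ \nu\in\mathcal N\}$ (or an equivalent), combined with the involution $\nu\mapsto K-\nu$ given by orientation reversal. As written, the proposal has a genuine gap at its central lemma.
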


Our main application of the tropical Riemann-Roch Theorem will be the case when $\Gamma$ is a single loop. In that case the divisor $K$ has no chips whatsoever. For any divisor $D$ with positive degree, $K-D$ then has negative degree. Thus the rank $r(K-D) = -1$, so the Riemann-Roch theorem tells us that
\[r(D) +1 = \deg(D) + 1 - 1 = \deg(D).\]
So for the graph $\Gamma$ consisting of a single loop, for any divisor $D$ with $\deg(D) \ge 1$, the rank $r(D)$ is given by $r(D) = \deg(D) - 1$. We will frequently use this fact.

For each degree $d$ and rank $r$, we would like to examine $W^r_d(\Gamma) \subseteq \Pic^d(\Gamma)$, which denotes the set of all divisors of $\Gamma$ that have degree exactly $d$ and rank at least $r$. The set $W^r_d(\Gamma)$ is a polyhedral subset of $\Pic(\Gamma)$, but it is not necessarily pure dimensional (see, for example, \cite{lpp}). It is then natural to ask what its dimension is, where the dimension of $W^r_d(\Gamma)$ is defined as the largest dimension of any cell in $W^r_d(\Gamma)$. If $\Gamma$ has genus $g$, the algebraic-geometric analogue suggests that the dimension of $W^r_d(\Gamma)$ is the Brill-Noether estimate $\rho(g,r,d) = g - (r + 1)(g - d + r)$. We spend the remainder of this paper examining the accuracy of this estimate for metric graphs that are combinatorially trees of loops.

\section{Geometric Brill-Noether generality}

\begin{definition}
A metric graph $\Gamma$ is \emph{geometric Brill-Noether general} if:
\begin{enumerate}
\item $W^r_d(\Gamma)$ is empty whenever $\rho(g,r,d)$ is negative.
\item $W^r_d(\Gamma)$ has dimension $\mathrm{min}\{\rho,g\}$ whenever $\rho(g,r,d)$ is nonnegative.
\end{enumerate}
\end{definition}

The authors in \cite{cdpr} have shown that a general path of loops is geometric Brill-Noether general. This leads to the question of whether or not the same holds for trees of loops, which we address using inequalities. These inequalities, presented in \ref{dim_inequality}, might be of independent interest. However, we will first present a tree of loops of small genus that is not geometric Brill-Noether general.

\begin{lemma}
\label{w13_mismatch}
Let $\Gamma$ be a tree of loops of genus $4$ that is not a path of loops. Then $\Gamma$ is not geometric Brill-Noether general; in particular, $\mathrm{dim}(W^1_3(\Gamma)) > \rho(4,1,3)$.
\end{lemma}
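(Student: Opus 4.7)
The plan is to construct a one-parameter family of degree-$3$ divisor classes of rank at least $1$; since $\rho(4,1,3) = 4 - 2(4-3+1) = 0$, this will yield $\dim W^1_3(\Gamma) \geq 1 > \rho$. Structurally, a genus-$4$ tree of loops that is not a path of loops must be a ``star'' of loops: there is a central loop $L_0$ with three peripheral loops $L_1, L_2, L_3$ attached to $L_0$ at three distinct points $p_1, p_2, p_3$. I would take as my family $D_t = p_1 + q_t + p_3$, with $q_t$ varying over a small arc of $L_0$ disjoint from $\{p_1,p_2,p_3\}$.

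The main tool is an \emph{extension principle}: any piecewise-linear function on a single subloop $L_i$ extends by constancy across the remaining loops to a PL function on $\Gamma$ whose divisor coincides with the one on $L_i$ alone (the slopes contributed by the complementary loops vanish and do not alter $\mathrm{ord}$ at the attachment point). Hence, any linear equivalence of divisors supported on a standalone $L_i$ also holds on $\Gamma$. Combined with the single-loop rank formula $r(D) = \deg(D) - 1$, this suffices to verify $r(D_t) \geq 1$: for each $q \in \Gamma$, I need to show $D_t - q$ is effective-equivalent. If $q \in L_0$, then $D_t$ itself is supported on the loop $L_0$ and has rank $2$ there, so some effective divisor of the form $q + b + c$ (with $b, c \in L_0$) is linearly equivalent to $D_t$, and the equivalence lifts to $\Gamma$. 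If $q \in L_i \setminus \{p_i\}$ for some $i$, I first use an $L_0$-move to rewrite $D_t \sim 2p_i + c$, where $c$ is the unique point of $L_0$ matching classes in $\Pic(L_0)$; then an $L_i$-move rewrites $2p_i \sim q + q'$, with $q' \in L_i$ the unique point satisfying $q + q' \equiv 2p_i$ in $\Pic(L_i)$. The resulting divisor $q + q' + c$ is effective with a chip at $q$.

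Finally, I would verify that $\{[D_t]\}$ is genuinely one-dimensional in $\Pic^3(\Gamma)$: the Jacobian decomposes as $\mathrm{Jac}(\Gamma) \cong L_0 \times L_1 \times L_2 \times L_3$, and the class $[D_t - D_{t'}]$ is represented by the $L_0$-supported degree-$0$ divisor $q_t - q_{t'}$, which is nonzero in the $L_0$-factor whenever $t \neq t'$. Thus $\dim W^1_3(\Gamma) \geq 1 > \rho(4,1,3)$, so $\Gamma$ is not geometric Brill-Noether general. The main obstacle is the rank verification; once the extension principle is in hand, each case reduces cleanly to a linear equivalence on a single embedded loop.
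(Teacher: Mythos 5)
Your proposal is correct and follows essentially the same route as the paper: the same family of degree-$3$ divisors (chips at two attachment points plus a chip moving along the central loop), the same rank verification via independent chip-firing on each loop together with the single-loop Riemann--Roch formula $r(D)=\deg(D)-1$, and the same conclusion $\dim W^1_3(\Gamma)\ge 1>\rho(4,1,3)=0$. Your explicit Jacobian-factor argument that $t\mapsto [D_t]$ is injective is a slightly more careful justification of the one-dimensionality than the paper gives, but it is not a different method.
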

\begin{proof}
Since $\Gamma$ is a tree of loops of genus $4$ that is not a path of loops, $\Gamma$ must be given by the following picture with some edge lengths.

\begin{center}
\begin{tikzpicture}[semithick]
\draw (0,0) circle [radius=1];
\draw [fill] (0,-1) circle [radius=0.05] node[below right] {$C$};
\draw (2,0) circle [radius=1];
\draw [fill] (1,0) circle [radius=0.05] node[below right] {$A$};
\draw (-2,0) circle [radius=1];
\draw [fill] (-1,0) circle [radius=0.05] node[below right] {$B$};
\draw (0,-2) circle [radius=1];
\end{tikzpicture}
\end{center}

For any angle $0 \le \theta < 2\pi$, let $D_\theta$ be the divisor on $\Gamma$ of the form

\begin{center}
\begin{tikzpicture}[semithick]
\draw (0,0) circle [radius=1];
\draw (2,0) circle [radius=1];
\draw [fill] (1,0) circle [radius=0.05] node[right] {$1$};
\draw (-2,0) circle [radius=1];
\draw [fill] (-1,0) circle [radius=0.05] node[right] {$1$};
\draw (0,-2) circle [radius=1];
\draw [fill] (0.707106,0.707106) circle [radius=0.05] node[above right] {$1$};
\draw (0,0) -- (0.707106,0.707106);
\draw (0,0) -- (1,0);
\draw (0:.6) arc (0:45:.6);
\draw (25:.4) node {\scalebox{0.7}{$\theta$}};
\end{tikzpicture}
\end{center}

In other words, $\Gamma$ has one chip at each of $A$ and $B$ and a third chip at an angle $\theta$ on the central circle. Each $D_\theta$ has degree $3$. Let $E$ be any divisor of degree $1$; then $E$ consists of one chip at some point $p \in \Gamma$. Crucially, the chip-firing game may be played independently on any loop, by firing all points on one of the other loops whenever a chip is in danger of going from one to the next. Then if $p$ is a point in the central loop, $D_\theta - E = D_\theta - (p)$ has degree $2$ when restricted to the central loop, which by Riemann-Roch is linearly equivalent to an effective divisor. If $D_\theta$ is $A$-, $B$-, or $C$-reduced, it has at least two chips at $A$, $B$, or $C$, respectively. Thus if $p$ is in the right, left, or bottom loop, we may take the $A$-, $B$- or $C$-reduced divisor equivalent to $D_\theta$, respectively. The degree of this divisor with one chip removed from $p$, and restricted to the right, left, or bottom loop, is $1$. By Riemann-Roch this loop is linearly equivalent to an effective divisor, so $D_\theta-E$ must be linearly equivalent to an effective divisor.

In particular, for all $\theta$, $D_\theta$ has rank at least $1$. Thus we have a one-dimensional subset of $W^1_3(\Gamma)$, with parameter $\theta$, so $\mathrm{dim}(W^1_3(\Gamma)) \ge 1$. However, the genus of $\Gamma$ is $4$, and $\rho(4,1,3) = 0 < 1$.
\end{proof}

\begin{definition} Suppose $\Gamma_1$ and $\Gamma_2$ are metric graphs. For any $p \in \Gamma_1$ and $q\in \Gamma_2$, the \textit{wedge sum} $\Gamma_1 \wedge_{p,q} \Gamma_2$ is the metric graph obtained by gluing $\Gamma_1$ and $\Gamma_2$ via the identification $p\sim q$. 

\begin{center}
\begin{tikzpicture}
\draw (-1,0) circle [radius=1] node {$\Gamma_1$};
\draw (0.8,0) circle [radius=0.8] node {$\Gamma_2$};
\draw [fill] (0,0) circle [radius=0.05] node[below left] {$q$};
\end{tikzpicture}
\end{center}
\end{definition}

As mentioned in the proof of Lemma \ref{w13_mismatch}, chip-firing may be performed independently on either side of a wedge point. It is therefore possible to examine the relationship between different values of $W^r_d$ for a wedge sum and its summands. For $\Gamma$ a metric graph and $\Gamma_1 \subset \Gamma$ a metric subgraph, for any divisor $D \in \text{Div}(\Gamma)$, we will denote by $D \vert_{\Gamma_1}$ the \emph{restriction} of $D$ to $\Gamma_1$, namely the divisor $D_1$ on $\Gamma_1$ with $D_1(p) = D(p)$ for all $p \in \Gamma_1$.

\begin{theorem} 
\label{dim_inequality}
Let $\Gamma_1$ be any metric graph, $\Gamma_2$ a loop, and $\Gamma = \Gamma_1 \wedge \Gamma_2$ an arbitrary wedge sum with wedge point $q$. If $W^r_{d} (\Gamma_1)$ is nonempty, then $$\dim(W^{r}_{d+ 1} (\Gamma)) \ge \dim(W^{r}_{d} (\Gamma_1)) + 1$$
\end{theorem}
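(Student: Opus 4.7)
The plan is to produce, from a cell of maximal dimension $k := \dim W^r_d(\Gamma_1)$ in $W^r_d(\Gamma_1)$, a $(k+1)$-dimensional polyhedral family of rank-$r$ divisors of degree $d+1$ on $\Gamma$ by attaching one chip on $\Gamma_2$ to each divisor in the cell. Concretely, fix such a cell $C$, view $\Gamma_1 \hookrightarrow \Gamma$, and study
\[
    \phi \colon C \times \Gamma_2 \longrightarrow \Pic^{d+1}(\Gamma),
    \qquad (D_1, p) \longmapsto [D_1 + (p)]_\Gamma.
\]
To see that $\phi$ lands in $W^r_{d+1}(\Gamma)$, take any effective $E$ on $\Gamma$ with $\deg E = r$ and split $E = E_1 + E_2$ with $\supp(E_1) \subseteq \Gamma_1$ (including chips at $q$) and $\supp(E_2) \subseteq \Gamma_2 \setminus \{q\}$; set $d_2 := \deg E_2$. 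The divisor $E_1 + d_2 \cdot (q)$ is effective of degree $r$ on $\Gamma_1$, so the hypothesis $r_{\Gamma_1}(D_1) \ge r$ provides an effective $G$ on $\Gamma_1$ with $D_1 - E_1 - d_2\cdot(q) \sim_{\Gamma_1} G$. Extending the witnessing piecewise linear function by the constant $f(q)$ across $\Gamma_2$ preserves its divisor (the $\Gamma_2$-slope contribution at $q$ vanishes), so the equivalence still holds on $\Gamma$ and
\[
    D_1 + (p) - E \;\sim_\Gamma\; G + \bigl(\,d_2 \cdot (q) + (p) - E_2\,\bigr).
\]
The bracketed summand is supported on $\Gamma_2$ and has degree $1$; by the Riemann--Roch corollary for a single loop recalled in Section~2, its rank is $0$, so it is equivalent on $\Gamma_2$---and hence on $\Gamma$---to a single chip. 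Combining with $G$ exhibits $D_1 + (p) - E$ as equivalent to an effective divisor, so $r_\Gamma(D_1 + (p)) \ge r$.

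For the dimension count I would show $\phi$ is injective modulo $\Gamma_1$-equivalence on its first factor when restricted to $C \times (\Gamma_2 \setminus \{q\})$. If $\phi(D_1, p) = \phi(D_1', p')$ is realized by a piecewise linear $f$ on $\Gamma$ with $\mathrm{div}_\Gamma(f) = D_1' - D_1 + (p') - (p)$, then $f_2 := f|_{\Gamma_2}$ satisfies $\mathrm{div}_{\Gamma_2}(f_2) = (p')-(p) + \alpha \cdot (q)$, and the degree constraint $\deg \mathrm{div}_{\Gamma_2}(f_2) = 0$ forces $\alpha = 0$. Hence $(p) \sim_{\Gamma_2} (p')$; but on a loop, distinct points represent distinct classes in $\Pic^1(\Gamma_2)$ (any piecewise linear function with integer slopes and divisor $(p')-(p)$ forces an integrality constraint incompatible with $p \ne p'$), so $p = p'$. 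The parallel argument applied to $f_1 := f|_{\Gamma_1}$ then yields $D_1 \sim_{\Gamma_1} D_1'$. Since the image of $C$ in $\Pic^d(\Gamma_1)$ has dimension $k$ and $\Gamma_2 \setminus \{q\}$ injects as an independent $1$-dimensional set through the Abel--Jacobi map, the image of $\phi$ is a $(k+1)$-dimensional polyhedral subset of $W^r_{d+1}(\Gamma)$. A polyhedral subset of dimension $k+1$ cannot be covered by cells of smaller dimension, so $\dim W^r_{d+1}(\Gamma) \ge k+1$, which is the claimed bound.

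The main obstacle is the rank computation in the first step: the $d_2$ chips of $E_2$ must be rerouted through the wedge point $q$ into $\Gamma_1$, where the rank hypothesis on $D_1$ can cancel them together with $E_1$; then the residual degree-$1$ divisor on $\Gamma_2$ must be pushed to a single effective chip, which is precisely what Riemann--Roch on a loop supplies. This explains structurally why gaining one in degree buys exactly one in dimension. The gluing of piecewise linear functions across $q$---invoked in both steps---reduces to the observation that extending a PL function on $\Gamma_i$ by a constant over the other wedge summand yields a PL function on $\Gamma$ with the same divisor as on $\Gamma_i$.
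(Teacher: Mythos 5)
Your proof is correct and follows essentially the same strategy as the paper's: add one chip on the loop $\Gamma_2$, verify the rank by splitting the test divisor $E$ across the wedge point (using the rank hypothesis on $\Gamma_1$ and Riemann--Roch on the loop for the residual degree-$1$ piece), and gain one dimension from the independent $\Gamma_2$-direction in the Picard group. The only differences are cosmetic: you reroute the $\Gamma_2$-chips of $E$ through $q$ where the paper first $q$-reduces $E$, and you establish the dimension increase by a direct injectivity argument where the paper invokes orthogonality of $\Gamma_1$ and $\Gamma_2$ under the cycle intersection pairing.
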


\begin{proof} 
Let $D \in W^r_d (\Gamma_1)$ and $p\in \Gamma_2$ be arbitrary; we may assume, without loss of generality, that $D$ is $q$-reduced. We claim $r(D + (p)) \ge r$ as a divisor on $\Gamma$. To see this, let $E \in \Div(\Gamma)$ be any effective divisor of degree $r$, which is $q$-reduced without loss of generality, and denote $A:= D+(p) - E$. Let $A_1$ and $A_2$ be the unique divisors on $\Gamma$ such that:

\begin{itemize}
\item $A= A_1 + A_2$, 
\item $\supp(A_i) \subset \Gamma_i$ for $i=1,2$, 
\item $\deg(A_1) = d-r$ and $\deg(A_2) = 1$
\end{itemize} 

Since chip firing moves can be conducted independently on $\Gamma_1$ and $\Gamma_2$, to show that $A$ is equivalent to an effective divisor, we need only show that $A_1$ and $A_2$ are equivalent to effective divisors on $\Gamma_1$ and $\Gamma_2$ respectively. By construction, $A_1 = D - E_1$ for some effective divisor $E_1$ of degree $r$ with $\supp(E_1) \subset \Gamma_1$. Because $D$ has rank $r$ in $\Gamma_1$, it follows $A_1$ is equivalent to an effective divisor on $\Gamma_1$. The Riemann-Roch theorem implies $A_2$ is equivalent to an effective divisor on $\Gamma_2$, since $\deg(A_2) = 1 = \operatorname{genus}(\Gamma_2)$. Thus, we conclude $D+(p)$ has rank at least $r$ on $\Gamma$.

We may identify $W^r_d (\Gamma_1)$ with a polyhedral subset of $W^r_d (\Gamma)$ and $W^0_1 (\Gamma_2)$ with a polyhedral subset of $W^{0}_1 (\Gamma)$ via the natural inclusion. Then, working inside $\Pic^{d+1} (\Gamma)$, the above argument furnishes an injective map from the Minkowski sum $W^r_d (\Gamma_1) + W^{0}_1 (\Gamma_2)$ into $W^r_{d+1} (\Gamma)$. It follows that $$\dim(W^r_{d+1} (\Gamma)) \ge \dim (W^r_d (\Gamma_1) + W^{0}_1 (\Gamma_2)) = \dim(W^r_d (\Gamma_1) ) + 1$$ where the second equality follows from the fact that images under the Abel-Jacobi map of the edges of $\Gamma_1$ are orthogonal to the image of $\Gamma_2$ with respect to the ``cycle intersection'' bilinear form, since the intersection of $\Gamma_2$ and any subset of $\Gamma_1$ is at most a point. 
\end{proof}

This theorem and technique is all that is necessary to prove Theorem A, which we restate and prove.

\begin{theorem}
A tree of loops $\Gamma$ is geometric Brill-Noether general if and only if $\Gamma$ is a path of loops.
\end{theorem}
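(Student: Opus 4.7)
The ``if'' direction is the theorem of \cite{cdpr}, so the plan is to prove the converse: every tree of loops that is not a path of loops fails geometric Brill-Noether generality. I would pin down the single pair $(r,d) = (1,g-1)$ at which the failure is visible and show by induction on the genus $g$ that $\dim(W^1_{g-1}(\Gamma)) \ge g-3$ for every non-path tree of loops $\Gamma$ of genus $g$. Since
\[ \rho(g,1,g-1) = g - (1+1)(g-(g-1)+1) = g-4, \]
this inequality immediately contradicts geometric Brill-Noether generality. Every tree on at most three vertices is a path, so the hypothesis forces $g \ge 4$, and the base case $g=4$ is precisely Lemma \ref{w13_mismatch}.

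For the inductive step with $g \ge 5$, I would peel off a leaf loop $L$ of $\Gamma$, writing $\Gamma = \Gamma_1 \wedge_q L$ where $\Gamma_1$ is a tree of loops of genus $g-1$, and invoke Theorem \ref{dim_inequality} to get
\[ \dim(W^1_{g-1}(\Gamma)) \ge \dim(W^1_{g-2}(\Gamma_1)) + 1. \]
Provided that $\Gamma_1$ is itself a non-path tree of loops, the inductive hypothesis would yield $\dim(W^1_{g-2}(\Gamma_1)) \ge g-4$, and hence $\dim(W^1_{g-1}(\Gamma)) \ge g-3 > \rho(g,1,g-1)$, closing the induction.

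The main obstacle is ensuring that this peeling can actually be performed without destroying the non-path structure, i.e., that the dual tree $T$ of $\Gamma$ (whose vertices are the loops and whose edges are wedge points) admits a leaf $\ell$ such that $T-\ell$ still contains a vertex of degree $\ge 3$. I plan to handle this combinatorially. Fix a vertex $v$ of $T$ with $\deg v \ge 3$. If some leaf of $T$ is not adjacent to $v$, remove it and $\deg v$ is unchanged. Otherwise every leaf is adjacent to $v$; a short argument on repeatedly stripping leaves from $T$ (noting that the resulting tree can have no non-$v$ leaf, else that non-$v$ leaf would have had another leaf attached to it in $T$, contradicting our assumption) forces $T$ to be a star centered at $v$, whence $\deg v = |T| - 1 \ge 4$, and removing any single leaf leaves $v$ of degree $\ge 3$. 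Once this combinatorial point is settled, the induction runs cleanly and the theorem follows.
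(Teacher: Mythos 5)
Your proof is correct and is essentially the paper's: both hinge on Lemma \ref{w13_mismatch} as the genus-$4$ base case and Theorem \ref{dim_inequality} to add one loop and one unit of dimension per step, tracking the pair $(r,d)=(1,g-1)$ against $\rho(g,1,g-1)=g-4$. The paper runs the induction in the opposite direction --- it fixes a genus-$4$ non-path subgraph $\Gamma_0\subseteq\Gamma$ and glues the remaining loops back on one at a time --- which renders your leaf-peeling combinatorial lemma unnecessary (though your version of it is true), since the intermediate graphs need not remain non-path trees once the dimension surplus is secured at genus $4$.
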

\begin{proof}
If $\Gamma$ is a path of loops, then $\Gamma$ is geometric Brill-Noether general from the main result of \cite{cdpr}. Now suppose that $\Gamma$ is a tree of loops, but not a path of loops; we will show $\Gamma$ cannot be geometric Brill-Noether general. Since $\Gamma$ is not a path, it must contain a subgraph of genus $4$ that is a tree of loops but not a path of loops, denoted $\Gamma_0 \subseteq \Gamma$. Trees are connected, so we can construct $\Gamma$ by adding one loop at a time to $\Gamma_0$. Let $\Gamma_i$ be the $i^{\text{th}}$ step in this process, once $i$ cycles have been added to $\Gamma_0$. Then $\Gamma_i$ has genus $i+4$ for all $i$, and if $\Gamma$ has genus $g$, then $\Gamma = \Gamma_{g-4}$. We will prove by induction that $\Gamma_i$ is not geometric Brill-Noether general for all $i$, and thus that $\Gamma$ is not geometric Brill-Noether general.

As a base case, $\Gamma_0$ is not geometric Brill-Noether general, since by Lemma \ref{w13_mismatch}, $\dim(W^1_3(\Gamma_0)) \ge 1 > \rho(4,1,3) = 0$. Now assume that $\Gamma_i$ is not geometric Brill-Noether general, i.e. that there exist $r,d$ such that $\dim(W^r_d(\Gamma_i)) > \rho(i + 4, r, d)$. By Theorem \ref{dim_inequality}, we see
\begin{align*}
\dim (W^{r}_{d+1}(\Gamma_{i+1})) &\ge \dim (W^r_d(\Gamma_i)) + 1 \\
&\ge \rho(i + 4,r,d)+2 \\
&= \rho(i + 5,r,d+1) + 1 \\
&> \rho(i + 5,r,d+1).
\end{align*}

Thus, $\Gamma_{i+1}$ is not geometric Brill-Noether general, so by induction, $\Gamma$ is not geometric Brill-Noether general.
\end{proof}

\section{Weakly Geometric Brill-Noether generality}

Our second notion of Brill-Noether generality is a weakening of geometric Brill-Noether generality. One might wonder if $\rho$ is at minimum an indicator on trees of loops $\Gamma$ of whether the set $W^r_d(\Gamma)$ is empty.

\begin{definition}
A metric graph $\Gamma$ is \emph{weakly geometric Brill-Noether general} if $W^r_d(\Gamma)$ is nonempty whenever $\rho \ge 0$ and empty otherwise.
\end{definition}

Since this condition is strictly weaker than that of geometric Brill-Noether generality, 
it is conceivable that it all trees of loops would be weakly geometric Brill-Noether general. Nevertheless, most trees of loops still do not satisfy this condition.

\begin{proposition}
\label{2offnotweakly}
Let $\Gamma$ be a tree of loops of genus $g$ such that the longest path of loops consists of at most $g-2$ loops. Then $\Gamma$ is not weakly geometric Brill-Noether general.
\end{proposition}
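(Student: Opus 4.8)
The plan is to produce a \emph{type-one} violation: a pair $(r,d)$ with $\rho(g,r,d)<0$ for which $W^r_d(\Gamma)$ is nevertheless nonempty, exhibited by an explicit divisor of rank $r$ and degree $d$. The construction generalizes the divisor $D_\theta$ of Lemma \ref{w13_mismatch}. I will rely on three facts about chip-firing on a wedge of loops: chip-firing may be carried out independently on each loop across a wedge point; on a single loop any divisor of degree at least $1$ is equivalent to an effective divisor; and a divisor of degree $m$ on a loop realizes a linear system of rank $m-1$, so for any $m-1$ prescribed chips at prescribed points it is equivalent to an effective divisor placing those chips there. Throughout I will test rank using the criterion that $r(D)\ge r$ exactly when $D-E$ is equivalent to an effective divisor for every effective $E$ of degree $r$.

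First I would locate the branching forced by the hypothesis. Since the longest path of loops has at most $g-2$ loops, at least two loops lie off every longest path, and in particular $\Gamma$ contains a loop $C$ carrying several attached loops. I would place a divisor $D$ of degree $d$ on $C$, chosen so that $C$ acts as a distribution hub. The key point is a re-aiming phenomenon: because a degree-$d$ divisor on the genus-one loop $C$ has rank $d-1$, it can be reconfigured to place as many as $d-1$ chips at whichever attachment point is threatened by the test divisor $E$. To verify $r(D)\ge r$ I would split into cases according to where the chips of $E$ lie. If they lie on $C$, then $D-E$ restricted to $C$ still has nonnegative degree and the single-loop Riemann--Roch fact finishes the argument; if a block of $e$ chips of $E$ lies in an attached loop $L$, I would reconfigure $D$ on $C$ to place $e+1$ chips at the attachment point of $L$, after which $D-E$ has degree $1$ on $L$ and is equivalent to an effective divisor there. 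Carrying this out simultaneously for all the attached loops hit by $E$ gives $r(D)\ge r$, and a short computation of $\rho(g,r,d)$ then shows it is negative for the chosen small $d$.

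The main obstacle is controlling the degree $d$ uniformly, so that $\rho(g,r,d)<0$ for \emph{every} tree of loops satisfying the hypothesis. This is transparent when the branching is concentrated, that is, when a single hub loop $C$ is adjacent to all of the off-path loops, as in the higher-genus analogues of the configuration in Lemma \ref{w13_mismatch}; then one loop of modest degree serves every attached loop and $d$ stays well below the degree at which $\rho\ge 0$. The difficulty is the general tree, where the off-path loops may attach to different loops, so that a single test divisor can threaten loops served by different hubs at once; feeding chips to such a loop requires routing them across intermediate loops, which is expensive, since a single chip cannot be transported around a loop and moving two chips across a shared vertex itself consumes degree. I expect to resolve this by an inductive placement of $D$ that distributes chips across the branch loops so that every loop can be fed while the total degree stays below threshold, the savings coming precisely from the two or more off-path loops. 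The induction would be organized using the fact that wedging a single loop drops the rank of a divisor supported away from it by at most one, together with the dimension inequality of Theorem \ref{dim_inequality} to propagate the resulting nonemptiness.
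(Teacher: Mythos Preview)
Your proposal has a genuine gap. You correctly identify the goal---produce $(r,d)$ with $\rho(g,r,d)<0$ and $W^r_d(\Gamma)\ne\emptyset$---and your hub construction is fine for star-like trees, but you yourself flag that it breaks down when the off-path loops attach at different places, and the inductive fix you sketch does not work. In particular, Theorem \ref{dim_inequality} cannot propagate a weak Brill--Noether violation: wedging a loop takes $W^r_d(\Gamma_1)\ne\emptyset$ to $W^r_{d+1}(\Gamma)\ne\emptyset$, but $\rho(g{+}1,r,d{+}1)=\rho(g,r,d)+1$, so the negativity of $\rho$ is eroded, not preserved. There is also no base case to start from: the genus-$4$ configuration of Lemma \ref{w13_mismatch} has $\rho(4,1,3)=0$, so it exhibits a dimension violation, not a weak one.

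The idea you are missing is that the right organizing parameter is the longest path length $l$, not the amount of branching at a hub. Take $r=1$ and place all $d\approx l/2+1$ chips at the \emph{center} of a longest path (the shared vertex of the two middle loops when $l$ is even, any point on the middle loop when $l$ is odd). On a single loop a degree-$m$ divisor has rank $m-1$, so you can push $m-1$ chips to the next attachment vertex; iterating, a loop at distance $k$ from the center receives at least $d-k$ chips. The crucial observation is that \emph{every} loop is within distance $\lfloor l/2\rfloor-1$ of the center---otherwise appending it to half of the longest path would produce a longer path---so every loop receives at least $2$ chips and $r(D)\ge 1$. Now $\rho(g,1,d)=2d-g-2$, which for $d=\lfloor l/2\rfloor+1$ or $(l+3)/2$ gives $\rho\le l-g+1\le -1$ by the hypothesis $l\le g-2$. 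This handles all trees at once, with no case analysis on how the branching is distributed.
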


\begin{proof}

We will consider two cases. Let $l$ be the number of loops in the longest path of loops in $\Gamma$. First, consider the case where $l$ is even. In this case, let $q$ be the intersection point of the middle two loops of a path of length $l$, as pictured. 

\begin{center}
\begin{tikzpicture}
\draw (-1,0) circle [radius=1];
\draw (0.8,0) circle [radius = 0.8];
\draw [fill] (0,0) circle [radius=0.05] node[below left] {$q$};
\draw [fill] ($(0.8,0) + (27:0.8)$)circle [radius=0.05] node[below left] {$s$};
\draw [fill] ($(0.8,0) + (27:1.3) + (345:0.5)$) circle [radius = 0.05] node [below right] {$t$};
\draw ($(0.8,0) + (27:1.3)$) circle [radius = 0.5];
\draw ($(0.8,0) + (27:1.3) + (345:1.9)$) circle [radius = 1.4];
\draw ($(0.8,0) + (27:1.3) + (345:1.9) + (225:1.8)$) circle [radius = 0.4];
\draw ($(0.8,0) + (27:1.3) + (345:1.9) + (270:1.9)$) circle [radius = 0.5];
\draw ($(0.8,0) + (27:1.3) + (345:1.9) + (2,0)$) circle [radius = 0.6];
\node (dots) at ($(0.8,0) + (27:1.3) + (345:1.9) + (3,0)$) {$\cdots$};
\draw ($(-1,0) + (200:1.6)$) circle [radius=0.6];
\node (ldots) at  ($(-1,0) + (200:1.6) + (-1,0)$) {$\cdots$};
\draw [thick, decorate,decoration={brace,amplitude=10pt,mirror},xshift=0.4pt,yshift=-0.4pt](0,-2.2) -- (7.5,-2.2) node[black,midway,yshift=-0.6cm] {\footnotesize $l/2$ path loops};
\draw [thick, decorate,decoration={brace,amplitude=10pt,mirror},xshift=0.4pt,yshift=-0.4pt](-4,-2.2) -- (0,-2.2) node[black,midway,yshift=-0.6cm] {\footnotesize $l/2$ path loops};
\end{tikzpicture}
\end{center}

Let $D = \left(\frac l2 + 1 \right)(q)$. By the tropical Riemann-Roch theorem, the rank of $D$ when restricted to either of the loops containing $q$ is $\frac l2$. In particular, for $s$ the connection point as in the diagram, $D - \frac l2 (s)$ is equivalent to an effective divisor. In other words, $D$ is equivalent to some divisor $D'$ with $\frac l2$ chips placed on $s$. The rank of $D'$ when restricted to one of the loops containing $s$ is then $\frac l2 - 1$, so $D'$ has rank $\frac l2 - 1$ when restricted to one of these loops. Thus $D' - \left(\frac l2 - 1\right)(t)$ is equivalent to an effective divisor, since chip-firing can be performed on each loop in isolation, so $D'$ is equivalent to some effective divisor $D''$ with $\frac l2 - 1$ chips placed on $t$.

We will show by induction on $k$ that for $L$ any loop at distance $k \le \frac l2$ from one of the two loops containing $q$, $D$ is equivalent to some effective divisor with degree at least $\frac l2 - k + 1$ when restricted to the loop $L$. The base case is precisely the argument that $\frac l2$ chips may be placed on the point $s$ above. For the inductive step, let $L$ be a loop at distance $k$ from one of the loops containing $q$, and let $L'$ be the loop at distance $k-1$ that is adjacent to $L$. Let $v$ be the intersection point between $L'$ and $L$. By the inductive hypothesis, $D$ is equivalent to an effective divisor $D^{(k-1)}$ with degree at least $\frac l2 - k + 2$ when restricted to $L'$. Since $v$ is in both loops $L$ and $L'$, it suffices to show that $D^{(k-1)}$ is equivalent to some divisor with $\frac l2 - k + 1$ chips placed on $v$, or that $D^{(k-1)} - \left( \frac l2 - k + 1 \right)(v)$ is an effective divisor. Since $k \le \frac l2$, $\deg(D^{(k-1)}) \ge 2$, so the rank of $D^{(k-1)}$ is 
\[r(D^{(k-1)}) = \deg(D^{(k-1)}) - 1 = \frac l2 - k + 1.\]
But then $D^{(k-1)} - \left( \frac l2 - k + 1 \right)(v)$ is equivalent to an effective divisor, just as desired. This completes the inductive argument.

Any loop in $\Gamma$ is connected to a loop containing $q$ via a path of loops of length at most $\frac l2 - 1$, since otherwise we would have a longer maximal path. Let $v$ be an arbitrary point in $\Gamma$. Let $L$ be the closest loop to $q$ containing $v$ and let $k \le \frac l2 - 1$ be the distance between $L$ and a loop containing $q$. Then by our inductive argument, $D$ is equivalent to an effective divisor $D^{(L)}$ with at least $\frac l2 - k + 1$ chips placed on $L$. The bound $k \le \frac l2 -1$ implies that $D^{(L)}$ has at least $2$ chips placed on $L$ itself. But then by the tropical Riemann-Roch Theorem, $D^{(L)}$ has rank $1$ when restricted to $L$, so $D^{(L)} - (v)$ is linearly equivalent to an effective divisor. Since $D^{(L)}$ and $D$ are linearly equivalent, $D - (v)$ must also be linearly equivalent to an effective divisor. Thus $D$ has rank $1$, so
\[W^1_{\frac l2 + 1}(\Gamma) \ne \emptyset.\]
However, since $\Gamma$ has genus $g$ where $l \le g -2$,
\begin{align*}
\rho\left(g, 1, \frac l2 + 1\right) &= g - 2 \cdot \left(g - \frac l2 - 1 + 1\right) \\
&= -g + l \\
&\le -2 < 0.
\end{align*}
Thus $\Gamma$ is not Brill-Noether general.

Now we consider the case where $l$ is odd; it is very similar. In this case, let $q$ be any point on the middle loop of the longest path. Let $D = \frac{l+3}2(q)$. By an inductive argument identical to that of the even case, if $L$ is any loop at distance $k \le \frac{l-1}2$ from the center loop, then $D$ is equivalent to some effective divisor with degree at least $\frac {l+3}2 - k$ when restricted to $L$. All loops are at distance at most $\frac{l-1}2$ from the center loop, so for any loop $L$, $D$ is equivalent to some effective divisor $D^{(L)}$ with degree at least $\frac{l+3}2 - \frac{l-1}2 = 2$ when restricted to $L$. Then for any point $v \in L$, $D^{(L)} - (v)$ is linearly equivalent to some effective divisor just as above, so $D - (v)$ must also be linearly equivalent to an effective divisor. Thus $D$ has rank $1$, so 
\[W^1_{\frac{l+3}2}(\Gamma) \ne \emptyset.\]
However, $\Gamma$ has genus $g$, with $l \le g - 2$, so
\begin{align*}
\rho\left(g,1,\frac{l+3}2 \right) &= g - 2 \cdot \left( g - \frac{l+3}2 + 1 \right) \\
&= g - 2g + l + 3 - 2 \\
&= -g + l + 1 \\
&\le -1 < 0.
\end{align*}
Thus $\Gamma$ is not Brill-Noether general.
\end{proof}

However, note that this argument proves that for negative values of $\rho$, the set $W^r_d(\Gamma)$ may still be nonempty. This is in fact the only way in which weakly geometric Brill-Noether generality may be violated. In \cite{v}, van der Pol proved that for trees of loops $\Gamma$, referred to in his paper as \emph{cactus graphs}, whenever the value of $\rho$ is positive, the set $W^r_d(\Gamma)$ is nonempty.

\section{Rank Brill-Noether generality}

Following the work done in \cite{lpp}, we examine a more combinatorial definition of rank.

\begin{definition} 
Suppose $\Gamma$ is a metric graph and $r,d$ are natural numbers such that $W^r_d (\Gamma)$ is nonempty. The \textit{Brill-Noether rank} $w^r_d (\Gamma)$ is the largest integer $k$ such that for every effective divisor $E$ of degree $r+k$, there exists $D$ of rank at least $r$ and degree $d$ such that $D-E$ is effective. If $W^r_d (\Gamma)$ is empty, then we set $w^r_d (\Gamma) = -1$.
\end{definition}

\begin{definition}
A metric graph $\Gamma$ of genus $g$ is \emph{rank Brill-Noether general} if:
\begin{enumerate}
\item $w^r_d(\Gamma) = -1$ whenever $\rho(g,r,d) < 0$.
\item $w^r_d(\Gamma) = \rho(g,r,d)$ whenever $0 \le \rho(g,r,d) \le g$.
\end{enumerate}
\end{definition}

The negative result obtained in examining weak Brill-Noether generality carries over, so certainly rank Brill-Noether generality is out of reach for most trees of loops.

\begin{theorem}
Let $\Gamma$ be a tree of loops of genus $g$ such that the longest path of loops consists of at most $g-2$ loops. Then $\Gamma$ is not rank Brill-Noether general.
\end{theorem}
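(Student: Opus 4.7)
The plan is to show that this theorem is essentially a direct corollary of Proposition~\ref{2offnotweakly}. Under the identical hypothesis that the longest path of loops in $\Gamma$ has length $l \le g-2$, that proposition exhibits specific $(r,d)$ with $W^r_d(\Gamma) \ne \emptyset$ while $\rho(g,r,d) < 0$: namely $r = 1$ together with $d = \frac{l}{2} + 1$ when $l$ is even and $d = \frac{l+3}{2}$ when $l$ is odd. So these $(r,d)$ are my ``witnesses'' of non-generality; I just need to re-read them in the rank-theoretic language.

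The next step is to translate nonemptiness of $W^r_d(\Gamma)$ into a statement about the Brill-Noether rank. By definition, $w^r_d(\Gamma) = -1$ precisely when $W^r_d(\Gamma)$ is empty. Conversely, whenever $W^r_d(\Gamma)$ is nonempty, $w^r_d(\Gamma) \ge 0$: taking any representative $D$ of a class in $W^r_d(\Gamma)$ and any effective divisor $E$ of degree $r$, the divisor $D - E$ is equivalent to an effective divisor directly from $r(D) \ge r$, so the requirement defining $k = 0$ in $w^r_d$ is automatically satisfied. Applied to the $(r,d)$ produced by Proposition~\ref{2offnotweakly}, this yields $w^r_d(\Gamma) \ge 0 > -1$ even though $\rho(g,r,d) < 0$, which directly violates condition (1) in the definition of rank Brill-Noether generality.

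There is essentially no obstacle to this argument: rank Brill-Noether generality subsumes the ``only if'' direction of weakly geometric Brill-Noether generality, so the failure already established in Proposition~\ref{2offnotweakly} transfers immediately. All that remains is to carefully unwind the convention that identifies $w^r_d(\Gamma) = -1$ with emptiness of $W^r_d(\Gamma)$, which is built directly into the definition of Brill-Noether rank. The proof can therefore be written in a single short paragraph citing Proposition~\ref{2offnotweakly} and the definitions.
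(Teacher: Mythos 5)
Your proposal is correct and follows essentially the same route as the paper: both deduce the result immediately from Proposition~\ref{2offnotweakly} by observing that $W^r_d(\Gamma) \neq \emptyset$ forces $w^r_d(\Gamma) \ge 0$, contradicting condition (1) of rank Brill-Noether generality when $\rho(g,r,d) < 0$. Your extra care in justifying why nonemptiness gives $w^r_d(\Gamma) \ge 0$ (adjusting the witness $D$ by the effective part so that $D - E$ is literally effective, not just equivalent to an effective divisor) is a welcome detail the paper leaves implicit.
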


\begin{proof}
This follows immediately from Proposition \ref{2offnotweakly}. If for some $r,d$, the set $W^r_d(\Gamma) \neq \emptyset$, then $w^r_d(\Gamma) \ge 0$. If $\Gamma$ has genus $g$ where the longest path of loops consists of at most $g-2$ loops, then $\Gamma$ is not weakly geometric Brill-Noether general and in particular there exist $r,d$ such that $W^r_d(\Gamma) \neq \emptyset$ despite $\rho(g,r,d)$ being negative. Thus $w^r_d(\Gamma) \ge 0$ despite $\rho(g,r,d)$ being negative, so $\Gamma$ is not rank Brill-Noether general.
\end{proof}

We do not know the relationship between $\rho(g,r,d)$ and $w^r_d$ for a general tree of loops $\Gamma$ when $\rho > 0$. However, there is a constraint relating $w^r_d$ of a metric graph and $w^r_{d+1}$ of the graph formed from gluing a loop onto that metric graph. This constraint may prove useful in further explorations.

\begin{theorem} 
Let $\Gamma_1$ be any metric graph, $\Gamma_2$ a loop, and $\Gamma = \Gamma_1 \wedge \Gamma_2$ an arbitrary wedge sum (with wedge point $q$). If $W^{r}_{d} (\Gamma_1)$ is nonempty, then 
\[w^{r}_{d} (\Gamma_1) \le w^{r}_{d+1} (\Gamma) \le w^{r-1}_{d} (\Gamma_1).\]
\end{theorem}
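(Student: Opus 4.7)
\emph{Plan.} The statement packages two inequalities, and both proofs run in parallel to Theorem \ref{dim_inequality}: they use the independence of chip-firing on the two sides of the wedge point $q$ together with Riemann--Roch on the loop $\Gamma_2$. I will repeatedly invoke the following structural observation: a divisor on $\Gamma$ supported on $\Gamma_1$ is equivalent on $\Gamma$ to an effective divisor if and only if it is equivalent on $\Gamma_1$ to an effective divisor. The nontrivial direction holds because $q$-reduction on $\Gamma$ decomposes as $q$-reductions on each summand, and because on a loop of positive length $(q) - (p')$ is not principal for $p' \ne q$; together these force the $\Gamma_2$-side of any $q$-reduced effective witness to be concentrated at $q$.

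\emph{Lower bound.} Set $k = w^r_d(\Gamma_1)$ and fix an effective divisor $E$ of degree $r+k$ on $\Gamma$. Decompose $E = E_1 + E_2$ with $\supp(E_1) \subseteq \Gamma_1$ and $\supp(E_2) \subseteq \Gamma_2 \setminus \{q\}$, let $m = \deg(E_2)$, and apply the hypothesis to the divisor $E_1 + m(q)$ on $\Gamma_1$ to produce $D'$ of degree $d$ with $r_{\Gamma_1}(D') \ge r$ and with $D' - E_1 - m(q)$ equivalent on $\Gamma_1$ to an effective divisor. For any $p \in \Gamma_2$, set $D = D' + (p)$, a divisor of degree $d+1$ on $\Gamma$; the argument of Theorem \ref{dim_inequality} yields $r_\Gamma(D) \ge r$. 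Replacing $D'$ on $\Gamma_1$ by an equivalent divisor $H + m(q)$ with $H \ge E_1$ effective, one sees that $D - E$ is equivalent on $\Gamma$ to $(H - E_1) + m(q) + (p) - E_2$; the $\Gamma_1$ part is effective, while the $\Gamma_2$ part $m(q) + (p) - E_2$ has degree $1$ on the loop and hence is equivalent to an effective divisor by Riemann--Roch.

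\emph{Upper bound.} Set $k = w^r_{d+1}(\Gamma)$ and fix an effective $E_1$ on $\Gamma_1$ of degree $r - 1 + k$. Let $E = E_1 + (q)$ on $\Gamma$, effective of degree $r + k$, and apply the hypothesis to obtain a $q$-reduced divisor $D$ on $\Gamma$ of degree $d + 1$, rank at least $r$, with $D - E$ equivalent on $\Gamma$ to an effective divisor. Since $\Gamma_2$ is a loop of genus $1$, the $q$-reduced form of $D$ on the $\Gamma_2$-side is either concentrated at $q$ or has the shape $(n-1)(q) + (p')$ for a single $p' \in \Gamma_2 \setminus \{q\}$; let $s \in \{q, p'\}$ be the corresponding chip and set $D_1 = D - (s)$ on $\Gamma_1$, of degree $d$. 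For any effective $F_1$ on $\Gamma_1$ of degree $r - 1$, applying the rank of $D$ to $F_1 + (s)$ shows that $D_1 - F_1$ is equivalent on $\Gamma$ to an effective divisor, which promotes to equivalence on $\Gamma_1$ via the structural observation; thus $r_{\Gamma_1}(D_1) \ge r - 1$. To see that $D_1 - E_1$ is equivalent on $\Gamma_1$ to an effective divisor, write $D_1 - E_1 = (D - E) + ((q) - (s))$: in the case $s = q$ this equals $D - E$, while in the case $s = p'$ a comparison of $q$-reduced forms on $\Gamma_2$ forces any effective representative of $D - E$ on $\Gamma$ to carry a chip at $p'$, so transferring that chip to $q$ produces an effective representative supported on $\Gamma_1$, after which the structural observation applies.

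\emph{Main obstacle.} The main technical difficulty is the structural observation itself. Its proof uses that closed connected subsets of $\Gamma \setminus \{q\}$ lie in a single $\Gamma_i$, so Dhar's burning algorithm runs independently on each side; combined with the uniqueness of $q$-reduced forms on the loop $\Gamma_2$ and the non-triviality of $(q) - (p')$ in $\mathrm{Jac}(\Gamma_2)$ for $p' \ne q$, this forces any $q$-reduced effective divisor equivalent on $\Gamma$ to a $\Gamma_1$-supported one to itself be supported on $\Gamma_1$. Once this is granted, the remaining manipulations are bookkeeping in the spirit of Theorem \ref{dim_inequality}.
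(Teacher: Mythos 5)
Your proof follows essentially the same route as the paper's: both bounds rest on the independence of chip-firing across the wedge point $q$, Riemann--Roch on the loop $\Gamma_2$, and $q$-reduced representatives; your ``structural observation'' is the same principle the paper invokes when passing between ranks on $\Gamma$ and on $\Gamma_1$ (and you are, if anything, more explicit about it than the paper is). In the lower bound you avoid the paper's case split on whether the $q$-reduced form of $E$ has a chip off $q$ by formally replacing $E_2$ with $m(q)$ when invoking $w^r_d(\Gamma_1)$ and absorbing the discrepancy on the loop via Riemann--Roch; this is a harmless variant.

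Two points need repair. First, the definition of $w^r_d$ demands that $D-E$ be effective on the nose, not merely equivalent to an effective divisor; you should add the paper's observation that if $D'-E \sim F \ge 0$ then $D := F+E$ is equivalent to $D'$ (hence has the same rank and degree) and satisfies $D-E = F \ge 0$. Second, in the upper bound your claim that \emph{any} effective representative of $D-E$ must carry a chip at $p'$ is false: if $F \sim D-E$ is effective, its restriction to $\Gamma_2 \setminus \{q\}$ need only be an effective divisor of some degree $m$ equivalent on the loop to $(m-1)(q) + (p')$, and for $m \ge 2$ such a divisor can avoid $p'$ entirely. What is true, and suffices, is that the $q$-reduced effective representative $G$ of $D-E$ has loop part exactly $(p')$: a $q$-reduced divisor has at most one chip on $\Gamma_2 \setminus \{q\}$, and its location is determined by the $\Gamma_2$-component of the class, which is that of $(p')$ because $E = E_1 + (q)$ is supported on $\Gamma_1$. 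Hence $D_1 - E_1 = (D-E) + (q) - (p') \sim G - (p') + (q)$ is effective and supported on $\Gamma_1$, and your structural observation finishes the argument. With these two fixes the proof is complete.
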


\begin{proof} 
Set $k = w^r_d(\Gamma_1)$; $k \ge 0$, since $W^r_d(\Gamma_1)$ is nonempty. 

First, we will prove the lower bound. Let $E \in \Div^{r+k}(\Gamma)$ be an arbitrary effective divisor of degree $r+k$; we will construct a divisor $D \in W^r_{d+1}(\Gamma)$ with $D-E$ effective. We may first $q$-reduce $E$ to obtain $E' \sim E$, where $E'$ must still be effective and must contain either one chip or zero chips placed on $\Gamma_2 \setminus \{q\}$. Note that if there exists a divisor $D'$ with $D'-E'$ effective, we can let $D = (D'-E') + E = D' + (E-E')$, which is then a divisor, still in $W^r_{d+1}(\Gamma)$, with $D-E = D'-E'$ effective. Thus it suffices to find a satisfactory divisor for the $q$-reduced case.

If $E'$ contains exactly one chip on a point $p \in \Gamma_2 \setminus \{q\}$, let $D_2$ be the divisor $(p)$ on $\Gamma_2$. $E'\vert_{\Gamma_1}$ is an effective divisor of degree $r+k-1$, so there is a divisor $D_1 \in W^r_d(\Gamma_1)$ with $D_1 - E'\vert_{\Gamma_1}$ effective. Then $D' = D_1\vert^{\Gamma} + D_2\vert^{\Gamma}$ is an element of $W^r_{d+1}(\Gamma)$ by the argument in Theorem \ref{dim_inequality}, with $D'-E'$ effective; thus there is a divisor $D \in W^r_{d+1}(\Gamma)$ with $D - E$ effective.

Now assume that $E'$ contains no chips on $\Gamma_2 \setminus \{q\}$. Then $E'\vert_{\Gamma_1}$ has degree $r+k$, so we can pick a divisor $D_1 \in W^r_d(\Gamma_1)$ with $D_1 - E'\vert_{\Gamma_1}$ effective. Let $p$ be any point in $\Gamma_2$; Then the divisor $D' = D_1\vert^{\Gamma} + (p)$ is an element of $W^r_{d+1}(\Gamma)$ with $D'-E'$ effective. Since $E \sim E'$, there exists a divisor $D \in W^r_{d+1}(\Gamma)$ with $D-E$ effective as well, so $w^r_{d+1}(\Gamma) \geq k = w^r_d(\Gamma_1)$.

We will now prove the upper bound. Let $l = w^r_{d+1}(\Gamma)$. We will show that $l$ is a lower bound for $w^{r-1}_d(\Gamma_1)$, or equivalently, that for any effective divisor $E_1$ on $\Gamma_1$ of degree $r-1 + l$, there exists a divisor $D_1 \in W^{r-1}_d(\Gamma_1)$ with $D_1 - E_1 \ge 0$.

Fix any effective divisor $E_1$ on $\Gamma_1$ of degree $r-1 + l$. Just as above, we may restrict to addressing the case when $E_1$ is $q$-reduced, so assume that this is the case. Let $E = (E_1 + (q))|^{\Gamma}$, which is then an effective divisor of degree $r + l$ on $\Gamma$. Note that $E$ remains $q$-reduced. Since $w^r_{d+1}(\Gamma) = l$ and $E$ is $q$-reduced, there exists a $q$-reduced divisor $D$ of degree $d+1$ on $\Gamma$ with $D - E \ge 0$.

Since $D$ is a $q$-reduced divisor on $\Gamma$, $D$ either has one chip on the loop $\Gamma \setminus \Gamma_1 = \Gamma_2 \setminus \{q\}$, or all chips are on $\Gamma_1$. If $D$ has one chip on $\Gamma_2 \setminus \{q\}$, let $p$ be the point at which this chip is located. If not, let $p = q$. 

Then $D - (p)$ is a divisor with no chips on $\Gamma \setminus \Gamma_1$. Since $D$ has rank at least $r$, as an element of $W^r_{d+1}(\Gamma)$, $D-(p)$ has rank at least $r-1$, since any additional $r-1$ chips may be removed from $D$ after the removal of $(p)$, and the result will still be equivalent to an effective divisor. Thus $D \in W^{r-1}_d(\Gamma)$; since $D-(p)$ has support on $\Gamma_1$, $D_1 = D-(p) \in W^{r-1}_d(\Gamma_1)$, as well.

It suffices to show that $D_1 = D- (p)$ satisfies $D_1 - E_1 \ge 0$; then for every choice of $E_1$ we will have found a divisor $D_1 \in W^{r-1}_d(\Gamma_1)$ with $D_1 - E_1 \ge 0$. If $p = q$, then
\begin{align*}
D_1 - E_1 &= D - (q) - (E - (q)) \text{, since $E = E_1 + (q)$} \\
&= D - E - (q) + (q) \\
&= D - E \ge 0\text{, just as desired.}
\end{align*}
Now if $p \ne q$, we know that $D|_{\Gamma_1} = D_1 = D - (p)$. Since $E = E_1 + (q)$ lies entirely on $\Gamma_1$, $D|_{\Gamma_1} - E \ge 0$, since $D - E \ge 0$. But then
\begin{align*}
D|_{\Gamma_1} - E &\ge 0 \\
\Rightarrow D_1 - E &\ge 0 \\
\Rightarrow D_1 - E_1 - (q) &\ge 0 \\
\Rightarrow D_1 - E_1 &\ge 0\text{, just as desired.}
\end{align*}
Thus we have proven that $D_1 - E-1 \ge 0$, so $w^{r-1}_d(\Gamma_1) \ge w^r_{d+1}(\Gamma)$, which is exactly the desired result.
\end{proof}

\section{Acknowledgments}

This research was supervised by Farbod Shokrieh, at the Cornell Summer Program for Undergraduate Research in 2015. We would like to thank Farbod Shokrieh for his guidance and supervision in writing this paper, the other participants and advisors in the Cornell SPUR program for their helpful discussions, comments, and suggestions, and Brian Osserman for his comments and interest.


\providecommand{\bysame}{\leavevmode\hbox to3em{\hrulefill}\thinspace}
\providecommand{\MR}{\relax\ifhmode\unskip\space\fi MR }
\providecommand{\MRhref}[2]{%
  \href{http://www.ams.org/mathscinet-getitem?mr=#1}{#2}
}
\providecommand{\href}[2]{#2}

\vspace*{5mm}

\end{document}